 \newcommand\cof{\operatorname{cof}}
\numberwithin{equation}{section}
\newcommand{\ud}{\,d} 
\newcommand{\R}{\mathbb{R}}
\newcommand{\tir}[1]{\ensuremath{\overline {#1}}} 
\newtheorem{thm}{Theorem}[section]
\newtheorem{defn}[thm]{Definition} 
\newtheorem{rem}[thm]{Remark}
\def\whsq{\vbox to 5.8pt 
{\offinterlineskip\hrule 
\hbox to 5.8pt{\vrule height 
5.1pt\hss\vrule height 5.1pt}\hrule}}
\def\<{\langle} 
\def\>{\rangle} 
\def\PP{{\mathop{{\rm I}\kern-.2em{\rm P}}\nolimits}} 
\def\FF{{\mathop{{\rm I}\kern-.2em{\rm F}}\nolimits}}   
\def\ZZ{{\mathop{{\rm I}\kern-.2em{\rm Z}}\nolimits}} 
\title{
Convergence of a hybrid scheme for the elliptic Monge-Amp\`ere equation}
\author{Gerard Awanou \thanks{Department of Mathematics, Statistics, and Computer Science, M/C 249.
University of Illinois at Chicago, 
Chicago, IL 60607-7045, USA}
(\email{awanou@uic.edu})
}
\begin{document}
\maketitle
\slugger{sinum}{xxxx}{xx}{x}{x--x}

\begin{abstract}
We prove the convergence of a  hybrid discretization to the viscosity solution of the elliptic Monge-Amp\`ere equation. The hybrid discretization uses a standard finite difference discretization in parts of the computational domain where the solution is expected to be smooth and a monotone scheme elsewhere. A motivation for the hybrid discretization is  the lack of an  appropriate Newton solver for the standard finite difference discretization on the whole domain.  
\end{abstract}

\begin{keywords}Monge-Amp\`ere, hybrid discretization, monotone scheme, smooth solutions, viscosity solutions 
\end{keywords}

\begin{AMS}65M12, 65M06, 35J96 \end{AMS}

\pagestyle{myheadings}
\thispagestyle{plain}
\markboth{Gerard Awanou}{Convergence of a hybrid scheme}

\section{Introduction}
In this paper, we prove the convergence  of a hybrid discretization to the viscosity solution of the elliptic Monge-Amp\`ere equation. The discretization we analyze was proposed by Froese and Oberman in \cite{Oberman2010b}. 
The elliptic Monge-Amp\`ere equation is a fully nonlinear equation, i.e. nonlinear in the highest order derivatives. Unless the domain is smooth and strictly convex and the data are smooth, the solution is not expected to be smooth. 
By elliptic regularity, upon regularization of the data, the solution is smooth on any relatively compact subset. Since computers ''do not see'' the difference between the domain and an arbitrarily close subdomain, several methods provenly convergent for smooth solutions remain effective for non smooth solutions \cite{Awanou-Std-fd-jsc,Awanou-Std04}. Indeed, numerical experiments \cite{Benamou2010,Awanou-Std-fd-jsc} indicate that the discrete equations obtained through standard finite difference discretizations have a discrete convex solution in the sense that a certain discrete Hessian is positive. The solution can be retrieved through appropriate iterative methods. We note that the standard finite difference discretization is commonly used in science and engineering \cite{Headrick05,Chen2010b,Chen2010c}. However it is not known whether 
an appropriate Newton solver can be developed for the standard finite difference discretization. On the other hand numerical experiments reported in \cite{Oberman2010b} indicate that Newton's method can be applied to the nonlinear system resulting from a hybrid discretization. We do not reproduce them in this paper.

The hybrid discretization proposed in \cite{Oberman2010b} uses a  consistent, monotone and stable  scheme in parts of the domain where the solution is not expected to be smooth and a standard discretization in parts of the domain where the solution is smooth. We will often refer to a  consistent, monotone and stable  scheme simply as a monotone scheme. The monotone discretization is known to converge to the viscosity solution when used on the whole domain with a convergent Newton's method solver \cite{Oberman2010a,nochetto2018two,DiscreteAlex2}. As pointed out in \cite{Froese13} the convergence of the hybrid discretization introduced in  \cite{Oberman2010b} is still an open problem and results with the hybrid discretization of  \cite{Oberman2010b} are comparable with the ones obtained with the filtered approach in \cite{Froese13}.

In this paper, we 
combine the analysis of the standard finite difference discretization given in \cite{Awanou-k-Hessian12}, the classical framework for convergence of monotone schemes to viscosity solutions and an argument first given in \cite{DiscreteAlex2} for the boundary values of uniform limits of discrete convex functions, to obtain the convergence to the viscosity solution of the hybrid discretization. We assume in this paper that the discrete problem has a solution which is close to the interpolant in the subdomain where the solution is smooth. 

With a monotone discretization one can transfer to the discrete level arguments for viscosity solutions for partial differential equations. But it does not allow to give, in general, results for the standard finite difference discretization for smooth solutions. 
In fact, the quadratic convergence rate of the latter 
was only known as `` formally second-order accurate''  \cite{Benamou2010}. 
Moreover, the theory of Barles and Souganidis \cite{Barles1991} cannot be applied directly to a hybrid discretization. 
The Banach fixed point theorem, which is ubiquitous in the finite element analysis of nonlinear problems, has been adapted to the Monge-Amp\`ere equation in \cite{Bohmer2008,Feng2009,Brenner2010b}. It was combined in \cite{Awanou-k-Hessian12} with the continuity of the eigenvalues of a matrix as a function of its entries to give an analysis of the
standard finite difference discretization for smooth solutions of the equation. 
A consequence of the continuity of the eigenvalues of a matrix is that, in the context of the standard finite difference discretization, the discrete Hessian of a mesh function  is positive definite near a strictly convex smooth solution. 
The impact of the results of this paper goes beyond the particular application considered. For example, the techniques used here may equally be applied to a hybrid scheme for the convex envelope presented in \cite{Oberman07}. 

The best results for the hybrid discretization are obtained when the subdomain where the solution is not smooth, the set of singular points, is known in advance. An adaptive mesh refinement scheme could make it easier to identify the set of singular points.
As with \cite{Oberman2010b}, one can take a conservative approach and include a priori in the set of singular points, points where either $f(x)$  is not H$\ddot{\text{o}}$lder continuous, $f(x)$ is too small or $f(x)$ is too large. It is very likely that the approximation will deteriorate at  points which are close to boundary points where $\partial \Omega$ is not $C^3$ or strictly convex and points close to boundary points where $g(x)$ cannot be extended to a $C^3$ function. They may be included in the set of singular points as well. The motivation to consider these points as singular points comes from the regularity theory of the Monge-Amp\`ere equation. See for example \cite[Theorem 1.1]{Trudinger08}.  

A standard finite difference discretization of the Dirichlet problem for the Monge-Amp\`ere equation was introduced in \cite{Dean2006}. Finite element discretizations have also been proposed, e.g. \cite{GlowinskiICIAM07,Bohmer2008,Feng2009,Brenner2010b,Lakkis11b,Davydov12,Glowinski2014}. As explained above, the performance of these methods can be explained with elliptic regularity. 
For other provably convergent schemes for the Monge-Amp\`ere equation, we refer to \cite{Mirebeau15,Feng2017,nochetto2018two}.

This paper is organized as follows. In the second section, we recall the notion of viscosity solution and present the hybrid discretization. 
We also define our notion of discrete convex function in the second section and the main notation of the paper. 
In the third section we rely on results on the analysis of discretizations of smooth solutions to motivate our assumptions on the existence of a discrete solution. 
In the fourth section we use the now classical arguments of \cite{Barles1991} and recent arguments given in \cite{Awanou-k-Hessian12,DiscreteAlex2} to prove the convergence  of the hybrid discretization to the viscosity solution.

\section{Viscosity solutions of the elliptic Monge-Amp\`ere equation and the hybrid discretization}
To avoid difficulties with a curved boundary, we assume in this paper that the domain $\Omega$ is rectangular. We further make the assumption that $\Omega=(0,1)^2 \subset $  $\R^2$. 
For given $f > 0$ continuous on $\tir{\Omega}$ and $g$ continuous on $\partial \Omega$, with a convex extension $\tilde{g} \in C(\tir{\Omega})$, we consider the Monge-Amp\`ere equation
\begin{align} \label{m1}
\begin{split}
\det D^2 u & = f \, \text{in} \, \Omega \\
u & = g \, \text{on} \, \partial \Omega.
\end{split}
\end{align}
 Let $0 < h \leq 1$ denote the mesh size. We assume without loss of generality  that  $1/h \in \mathbb{Z}$. Put
\begin{align*}
\mathbb{Z}_h & = \{x=(x_1,x_2)^T \in \R^2: x_i/h \in \mathbb{Z} \}\\
\Omega^h_0 &= \Omega \cap  \mathbb{Z}_h, \Omega^h = \tir{\Omega} \cap  \mathbb{Z}_h, \partial \Omega^h = \partial \Omega \cap \mathbb{Z}_h= \Omega^h \setminus \Omega^h_0.
\end{align*}
For $x \in \R^2$,  we denote the maximum norm of $x$ by $|x|=\max_{i=1,2} |x_i|$. The norm $| . |_{}$ is extended canonically to matrices. For an integer $j$, $|v|_{j,\Omega} = \text{sup}_{|\beta|=j} \text{sup}_{\Omega} |D^{\beta}v(x)|$ for a multi-index $\beta$. Let $\mathcal{M}(\Omega^h)$ denote the set of real valued functions defined on $\Omega^h$, i.e. the set of mesh functions. For a subset $T_h$ of $\Omega^h$, and $v^h \in \mathcal{M}(\Omega^h)$ 
we define
$$
|v^h|_{T_h} = \max_{x \in T_h} |v^h(x)|.
$$
The norm $| . |_{T_h }$ is extended canonically to matrix fields. Let $v$ be a continuous function on $\Omega$ and let $r_h (v)$ denote the unique element of $\mathcal{M}(\Omega^h)$ defined by
$$
r_h (v) (x) = v(x), x \in \Omega^h. 
$$
We extend the restriction operator $r_h$ canonically to vector fields and matrix fields. For a function $g$ defined on $\partial \Omega$, $r_h(g)$ defines the analogous restriction on $\partial \Omega^h$. We make the usual convention of denoting  constants by $C$ but will occasionally index some constants.

\subsection{Viscosity solutions} \label{visc}
A convex function $u \in C(\tir{\Omega})$ is a viscosity solution of \eqref{m1} if $u  = g \, \text{on} \, \partial \Omega$ and for all $\phi \in C^2(\Omega)$ the following holds
\begin{itemize}
\item[-] at each local maximum point $x_0$ of $u-\phi$, $f(x_0) \leq \det D^2 \phi(x_0)$
\item[-] at each local minimum point $x_0$ of $u-\phi$, $f(x_0) \geq \det D^2 \phi(x_0)$, if $D^2 \phi(x_0) \geq 0$, i.e. $D^2 \phi(x_0)$ has positive eigenvalues.
\end{itemize}
As explained in \cite{Ishii1990}, the requirement $D^2 \phi(x_0) \geq 0$ in the second condition above is natural for the two dimensional case we consider. 
The space of test functions in the definition above can be restricted to the space of strictly convex quadratic polynomials \cite[Remark 1.3.3]{Guti'errez2001}.

An upper semi-continuous convex function $u$ is said to be a viscosity sub solution of $\det D^2 u(x) = f(x)$ if the first condition holds and a lower semi-continuous convex function is said to be a viscosity super solution when the second holds. A  viscosity solution of \eqref{m1} is a continuous function which satisfies the boundary condition and is both a 
viscosity sub solution and a viscosity super solution.

Note that the notion of viscosity solution is a pointwise notion. It is not very difficult to prove that if $u$ 
is $C^2$ at $x_0$, then $u$ is a viscosity solution at the point $x_0$ of $\det D^2 u  = f $. 

For further reference, we recall the comparison principle of sub and super solutions, \cite[Theorem V. 2]{Ishii1990}. Let $u$ and $v$ be respectively sub and super solutions of $\det D^2 u(x)=f(x)$ in $\Omega$ and put
$$
u^* = \limsup_{y \to x, y \in \Omega} u(y) \, \text{and} \, v_* = \liminf_{y \to x, y \in \Omega} v(y).
$$
Then if $\sup_{x \in \partial \Omega} \max(u^*(x) - v_*(x),0)=M$, then $u(x) - v(x) \leq M$ in $\Omega$.

There are very few references which give an existence and uniqueness result for \eqref{m1} in the degenerate case $f \geq 0$. In \cite{Ishii1990} it is required that one can find a sub solution and a super solution. The difficulty is that the Monge-Amp\`ere equation is not often studied in convex but not necessarily strictly convex domains. Thus we assume in addition that $f >0$.
Since $f \in C(\tir{\Omega})$ it follows that there exists a constant $c_0 >0$ such that 
$$
f \geq c_0 > 0.
$$
We also assume that $g$ can be extended to a convex function $\tilde{g} \in C(\tir{\Omega})$. Then by \cite[Theorem 1.1]{Hartenstine2006}, \eqref{m1} has a unique Aleksandrov solution. The existence and uniqueness of a viscosity solution to \eqref{m1} in  $C(\tir{\Omega})$ then follows from the equivalence of viscosity and Aleksandrov solutions \cite[ Propositions 1.3.4 and 1.7.1]{Guti'errez2001}, under these assumptions. 



\subsection{A reformulation of convexity} \label{ref-conv}
We recall that a function $\phi \in C^2(\Omega)$ is convex on $\Omega$ if the Hessian matrix $D^2 \phi$ is positive semidefinite or $\lambda_1[\phi] \geq 0$ where $\lambda_1[\phi]$ denotes the smallest eigenvalue of $D^2 \phi$. This notion was extended to continuous functions in \cite{Oberman07}. See also the remarks on  \cite[p. 226 ]{Trudinger97b}. 
An upper semi-continuous function $u$ is convex in the viscosity sense if and only if it is a viscosity solution of $-\lambda_1[u] \leq 0$, that is, for all $\phi \in C^2(\Omega)$, whenever $x_0$ is a local maximum point of $u-\phi$, $-\lambda_1[\phi] \leq 0$. 
This can also be written $\text{max}(-\lambda_1[u] ,0) = 0 \, \text{in} \, \Omega$, c.f. \cite{Oberman07}.

The Dirichlet problem for the Monge-Amp\`ere equation \eqref{m1} can then be written
\begin{align} \label{m11}
\begin{split}
-\det D^2 u + f & = 0 \, \text{in} \, \Omega \\
\text{max}(-\lambda_1[u] ,0) &= 0 \, \text{in} \, \Omega, \\
\end{split}
\end{align}
with boundary condition $u=g$ on $\partial \Omega$.
We write \eqref{m11} as $F(u)=0$ and 
note that the form of the equation is chosen to be consistent with the definition of ellipticity used for example in \cite{Ishii1990}.

Since we have now rewritten in \eqref{m11} convexity as an additional equation, sub solutions and super solutions of $-\det D^2 u + f  = 0$ do not need to be convex. We have the following comparison principle for \eqref{m11} \cite[Example 2.1 and Corollary 7.1]{BardiDragoni}: let $u^*$ be an upper semi-continuous sub solution of $-\det D^2 u + f  = 0$ which is convex in the viscosity sense and let $u_*$ be a lower semi-continuous super solution of $-\det D^2 u + f  = 0$ (which is not necessarily convex). Then
\begin{equation} \label{comparison-principle}
\sup_{\Omega} (u^*-u_*) \leq \max_{\partial \Omega} (u^*-v_*).
\end{equation}
A viscosity solution of \eqref{m11} is also a viscosity solution as defined in section \ref{visc}, since an upper semi-continuous function which is convex in the viscosity sense is also convex \cite[Example 2.1 and Theorem 3.1]{BardiDragoni}.


\subsection{Standard finite difference discretization} \label{standard}

Let $\Omega_r$ be a bounded convex domain of $\R^2$ with piecewise linear boundary. Put
$$
\Omega_r^h = \tir{\Omega}_r \cap \mathbb{Z}_h.
$$
Let $e^i, i=1,2$ denote the $i$-th unit vector. We define first order difference operators acting on functions defined on $\mathbb{Z}_h$. 
For $x \in \mathbb{Z}_h$ 
\begin{align*}
\partial^i_{+} v^h(x) & \coloneqq \frac{v^h(x+he^i)-v^h(x)}{h} \\
\partial^i_{-} v^h(x) & \coloneqq \frac{v^h(x)-v^h(x-he^i)} {h}\\
\partial^i_h v^h(x) & \coloneqq \frac{v^h(x+he^i)-v^h(x-he^i)}{2 h}.
\end{align*}
Note that
\begin{align}
\partial^i_{+} \partial^i_{-} v^h(x) & = \frac{v^h(x+he^i)-2v^h(x)+v^h(x-he^i)}{h^2} \label{second-disc1}
\end{align}
\begin{align}
\begin{split}
\partial^i_h \partial^j_h v^h(x) & = \frac{1}{4 h^2} \bigg\{v^h(x+he^i+h e^j)+v^h(x-he^i-h e^j) \\
& \qquad \qquad \qquad -v^h(x+he^i-h e^j)-v^h(x-he^i+ he^j)\bigg\}, i \neq j. \label{second-disc2}
\end{split}
\end{align}
The discrete Hessian is defined by
$$
\mathcal{H}_d(v^h) \coloneqq (\mathcal{H}_d(v^h))_{i,j=1,2}, (\mathcal{H}_d(v^h))_{ii} = \partial^i_{+} \partial^i_{-} v^h \text{ and } (\mathcal{H}_d(v^h))_{ij} = \partial^i_h \partial^j_h v^h, i \neq j.
$$
Put
\begin{align*}
\Omega_{r,0}^h  = \{ \, x \in \Omega_r^h, \mathcal{H}_d(v^h)(x) \,\text{ is defined for} \, v^h \in \mathcal{M}(\Omega^h) \, \} \text{ and }
\partial  \Omega_r^h  =  \Omega_r^h \setminus \Omega_{r,0}^h.
\end{align*}
We define
$$
M_r [v^h]   \coloneqq \det \mathcal{H}_d(v^h).
$$
The discrete version of \eqref{m1} on $\Omega_r$ takes the form
\begin{align} \label{disc-smooth-domain}
- M_r [u^h_r]  + r_h(f) =0 \, \text{in} \, \Omega_{r,0}^h, u^h_r =r_h(u) \, \text{on} \, \partial  \Omega_r^h.
\end{align}

Higher order finite difference operators are obtained by combining the above difference operators. For a multi-index $\beta=(\beta_1,\beta_2) \in \mathbb{N}^2$, we define
$$
\partial^{\beta}_{+} v^h \coloneqq  \partial^{\beta_1}_{+}  \partial^{\beta_2}_{+}v^h.
$$
The operators $\partial^{\beta}_{-}$ and $\partial^{\beta}_{h}$ are defined similarly. For a matrix $A$, we recall that the cofactor matrix $\cof A$ is defined by $(\cof A)_{ij}=(-1)^{i+j} d(A)_i^j$ where $d(A)_i^j$ is the determinant of the matrix obtained from $A$ by deleting the $i$th row and the $j$th column. Let $\mathcal{M}(\Omega_r^h)$ denote the set of real valued functions defined on $\Omega_r^h$ 
and let $L_h$ denote a discrete uniformly elliptic linear operator 
\begin{align*}
L_h v^h(x) = \sum_{i,j=1}^2 a^{ij}(x) \partial^i_- \partial^j_+ v^h(x), 
x \in \Omega_{r,0}^h,
\end{align*}
i.e. the matrix $(a^{ij}(x))_{i,j=1,2}$ is uniformly positive definite. 
We now define discrete analogues of the H$\ddot{\text{o}}$lder norms and semi-norms following \cite{Johnson74}. Let $[\xi,\eta]$ denote the set of points $\zeta \in \Omega_r^h$ such that $\xi_j \leq \zeta_j \leq \eta_j, j=1,2$. Then for $v^h \in \mathcal{M}(\Omega_r^h), 0 < \alpha < 1$, we define
\begin{align*}
|v^h|_{j,\Omega_{r,0}^h} & = \, \text{max} \, \{\, |\partial^{\beta}_{+}v^h (\xi)|, |\beta|=j, [\xi,\xi+\beta] \subset \Omega_r^h \, \}\\
[v^h]_{j,\alpha,\Omega_{r,0}^h} & = \, \text{max} \, \bigg\{\, \frac{|\partial^{\beta}_{+}v^h (\xi)- \partial^{\beta}_{+}v^h (\eta)|}{( |\xi-\eta|)^{\alpha}}, 
|\beta|=j, \xi \neq \eta, [\xi,\xi+\beta] \cup [\eta,\eta+\beta] \subset \Omega_r^h  \, \bigg\}\\
||v^h||_{p,\Omega_{r,0}^h} & = \, \text{max}_{j \leq p} \, |v^h|_{j,\Omega_{r,0}^h} \text{ and }
||v^h||_{p,\alpha,\Omega_{r,0}^h}  = ||v^h||_{p,\Omega_{r,0}^h}+ [v^h]_{p,\alpha,\Omega_{r,0}^h}. 
\end{align*}
The above norms are extended canonically to vector fields and matrix fields by taking the maximum over all components. For $j=0$, we have discrete analogues of the maximum and $C^{0,\alpha}$ norms and in the former case, we will also use $|v^h|_{\Omega_{r,0}^h}$ at the place of $|v^h|_{0,\Omega_{r,0}^h}$. We have \cite[Lemma 3.4]{Thomee1970}
\begin{thm} \label{discShauderPoisson}
Assume $ 0<\alpha<1$ and $v^h=0$ on $\partial \Omega_r^h$. Then there are constants $C$ and $h_0$ such that for $v^h \in \mathcal{M}(\Omega_r^h), h \leq h_0$
\begin{align} \label{discShauderPoisson0}
||v^h||_{2,\alpha,\Omega_{r,0}^h} \leq C ||L_h \, v^h||_{0,\alpha,\Omega_{r,0}^h}, 
\end{align}
with the constant $C$ independent of $h$.
\smallskip
\end{thm}

It can be shown that the constant $h_0$ depends only on $c_0$ and $m$ such that $c_0 \leq f \leq m/2$. 

We recall from  \cite{Awanou-k-Hessian12} that if 
$v$ is a strictly convex function and $D^2 v$ has smallest eigenvalue uniformly bounded below by a constant $a >0$, then for $\eta=a/4$, we have $w$ strictly convex, whenever $||w-v||_{C^2(\Omega)} < \eta$. Moreover the smallest eigenvalue of $D^2 w$ is uniformly bounded below by $3 a/2$. 
Since for $v \in C^{4}(\Omega)$,
\begin{align} \label{consistent3}
|r_h(D^2 v) - \mathcal{H}_d( r_h(v) )|_{\Omega_{r,0}^h} \leq C h^2 |v|_{4,\Omega},
\end{align}
there exists $0<h_0\leq h_1$ such that $h\leq h_1$, $\mathcal{H}_d( r_h(u) )$ has  smallest eigenvalue uniformly bounded below by $3 a/2$. 

We now summarize the approach in \cite{Awanou-k-Hessian12} for the solvability of \eqref{disc-smooth-domain}. Define
\begin{equation} \label{ball-h}
B_{\rho} ( u) = \{v^h \in \mathcal{M}(\Omega^h), ||v^h- r_h(u) ||_{2,\alpha,\Omega_{r,0}^h} \leq \rho \},
\end{equation}
and the operator
$R^h: \mathcal{M}(\Omega^h) \to \mathcal{M}(\Omega^h)$ by
\begin{align*} 
\bigg(  \cof (\mathcal{H}_d \, r_h u) \bigg) : \mathcal{H}_d  (v^h-R^h v^h) &=  \det (\mathcal{H}_d \, v^h)  - f  \, \text{in} \, \Omega^h_{r,0} \\
R^h(v^h) & = r_h(u)  \, \text{on} \, \partial \Omega_r^h,
\end{align*}

It is proved in \cite{Awanou-k-Hessian12}, using the discrete Schauder estimates of Theorem \ref{discShauderPoisson} and the above continuity of the eigenvalues of a matrix, that for $\rho=$O$(h^2)$ and $h$ sufficiently small, $R_h$ maps $B_{\rho} ( u)$ into itself and is a strict contraction in $B_{\rho} ( u)$. 

\subsection{Monotone schemes} \label{mono}

Let us denote by $F_h(v^h) \equiv \hat{F}_h(v^h(x), v^h(y)|_{ y \neq x})$ a discretization of $F(v)$. We recall the elements of the convergence theory of Barles and Souganidis \cite{Barles1991} and how its conditions were met by the discretization introduced in \cite{Oberman2010a}. Let $\Omega^h_s$ denote a subset of $\Omega^h$ and let $\partial \Omega^h_s$ denote its boundary, i.e. $\partial \Omega^h_s = \Omega^h \setminus \Omega^h_s$.

The scheme $F_h(v^h) =0$ is said to be monotone if for $z^h$ and $w^h$ in  $\mathcal{M}(\Omega^h_s)$, $z^h(y) \geq w^h(y), y \neq x$ implies $\hat{F}_h(z^h(x), z^h(y)|_{ y \neq x}) \geq \hat{F}_h(z^h(x), w^h(y)|_{ y \neq x})$. Here we use the partial ordering of $\R^2$, $(a_1,b_1) \geq (a_2,b_2)$ if and only if $a_1 \geq a_2$ and $b_1 \geq b_2$.

The scheme is said to be consistent if for all $C^2$ functions $\phi$, and a sequence $x_h \to x \in \Omega$,
$\lim_{h \to 0} F_h (r_h(\phi)) (x_h) = F(\phi)(x)$. 

Finally the scheme is said to be stable if $F_h(v^h)=0$ has a solution $v^h$ which is bounded independently of $h$.

It follows from \cite{Barles1991,nochetto2018two,DiscreteAlex2} that a consistent, stable and monotone scheme has a solution $v^h$ which converges locally uniformly to the unique viscosity solution of \eqref{m11}. Note that the convexity assumption on the exact solution is enforced through the definition of $F(v)$.


We recall the expression of the consistent, monotone and stable discretization of $\lambda_1[z]$ introduced in \cite{Oberman07}. For simplicity we consider only wide stencils. 
We have at an interior grid point $x$
\begin{equation} \label{disc-eig}
\lambda_1^h[v^h](x) = \min_{\alpha^h \in \R^2} \frac{v^h(x+ \alpha^h) -2 v^h(x) + v^h(x-\alpha^h)}{|\alpha^h|^2},
\end{equation}
where by $\alpha^h \in \R^2$ we mean vectors $\alpha^h$ for which the above expression is well defined for grid points. 

We also recall the expression $M_s[v^h]$ of the discretization of $\det D^2 v$ used in \cite{Oberman2010a}. For $x \in \Omega^h_s$ we denote by $W_h(x)$ the set of orthogonal bases of $\R^2$ such that for $(\alpha_1,\alpha_2) \in W_h(x)$ $x\pm \alpha_i \in \Omega^h, \forall i$. We have
\begin{equation} \label{disc-det}
M_s[v^h] (x) = \inf_{(\alpha_1,\alpha_2) \in W_h(x)} \prod_{i=1}^2 \frac{v^h(x+ \alpha_i) -2 v^h(x) + v^h(x-\alpha_i)}{|\alpha_i|^2}.
\end{equation}



The monotone discretization of \eqref{m11} can then be written
\begin{align} \label{m11mh}
\begin{split}
-M_s[u^h_s](x) + r_h(f)(x) & =0, x \in \Omega^h_s \\
\text{max}(-\lambda_1^h[u^h_s](x) ,0) &= 0, x \in \Omega^h_s \\
u^h_s(x) & = r_h(u)(x)\, \text{on} \, \partial \Omega^h_s. 
\end{split}
\end{align}
As with \cite{Oberman2010a}, the first two equations of \eqref{m11mh} are combined in a single equation. Recall that $x^+ = \max(x,0)$ and define
\begin{equation*} 
M_s^+[v^h] (x)= \inf_{(\alpha_1,\alpha_2) \in W_h(x)} \prod_{i=1}^2 \max \bigg( \frac{v^h(x+ \alpha_i) -2 v^h(x) + v^h(x-\alpha_i)}{|\alpha_i|^2},0 \bigg).
\end{equation*}
Then \eqref{m11mh} can be written
\begin{align} \label{m11mh2}
\begin{split}
-M_s^+[u^h_s](x) + r_h(f)(x) & =0, x \in \Omega^h_s \\
u^h_s(x) & = r_h(u)(x)\, \text{on} \, \partial \Omega^h_s. 
\end{split}
\end{align}
It is known \cite{Aw-Matamba,Froese2018,nochetto2018two} that \eqref{m11mh2} has a solution.

Finally we recall the following discrete comparison principle, the proof of which follows from arguments given in \cite{nochetto2018two}. If $M_s^+[v^h] \leq M_s^+[w^h]$ in $\Omega^h_s$ and $v^h \geq w^h$ on $\partial \Omega^h_s$, then $v^h \geq w^h$ on $\Omega^h_s$.

\begin{rem} We note that for the implementation of \eqref{m11mh2}, linear interpolation near the boundary can be used. Since the operator $M_s^+$ is pointwise consistent and convergence to viscosity solution is point by point, we do not discuss linear interpolation. It can be shown that for smooth solutions \cite{Awanou-symmetry}, it leads to a convergence rate O($h+ \ud \theta$) where $\ud \theta$ is called directional resolution and $\ud \theta \to 0$ as $h \to 0$.

\end{rem}

\subsection{The hybrid discretization} \label{hybrid}

\medskip

\begin{defn}
We call a point $x \in \Omega$ a regular point if the solution $u$ of \eqref{m1} is $C^2$ in a neighborhood of $x$. A point which is not a regular point is called a singular point.
\end{defn}

The above definition is natural if one considers the one dimensional Monge-Amp\`ere equation $-u''(x)=f$ and a standard finite difference approximation. In particular, at a regular point $x$, by a Taylor series expansion,
\begin{align} \label{consistent2}
\begin{split}
\lim_{h \to 0 }\max_{i,j=1,\ldots,n} |\partial^2 v(x)/(\partial x_i \partial x_j) - \partial^j_- \partial^i_+ (r_h v) (x)|   =0.  
\end{split}
\end{align}
Next, for $v \in C^4(\Omega)$, and $x \in \Omega$
\begin{align} \label{consistent}
\begin{split}
\max_{i,j=1,2 } |\partial^2 v(x)/(\partial x_i \partial x_j) - \partial^j_- \partial^i_+ (r_h v) (x) | \leq C h^2 |v|_{4,\Omega}.
\end{split}
\end{align}

Let $\Omega_r$ denote an open (rectangular) subset of $\Omega$ such that at every point $x$ of $\Omega_r$ the exact solution $u$ is $C^2$ in a neighborhood of $x$. Using the notation of section \ref{standard} we define
$$
\Omega_s^h = \Omega_0^h \setminus \Omega_{r,0}^h.
$$


\begin{defn}
By a discrete convex function, we mean a mesh function $v^h$ such that 
\begin{align}\label{disc-convex}
\begin{split}
\lambda_1^h[v^h] & \geq 0 \, \text{in} \, \Omega_s^h \\
\mathcal{H}_d \, v^h & \geq 0 \, \text{in} \, \Omega_{r,0}^h.
\end{split}
\end{align}
Strictly discrete convex functions are defined analogously.
\end{defn}

We note that a discrete convex function in the sense of the above definition is not necessarily convex on $\Omega_0^h$. See \cite{Oberman07} for the case $\Omega_s^h=\Omega^h_0$ and \cite{Moriguchi12} for a counterexample showing that the discrete Hessian $\mathcal{H}_d \, v^h$ can be positive without the mesh function $v^h$ being convex in the usual sense. The minor abuse of terminology we make is justified by  Theorem \ref{main} below which says in particular that the uniform limit of mesh functions which satisfy \eqref{disc-convex} and solve the discrete Monge-Amp\`ere equation \eqref{hybrid-disc} below, is convex.

For a subset $T_h$ of $\Omega^h$, we denote by $\mathcal{C}^h(T_h)$ the cone of discrete convex functions on $T_h$ and by $\mathcal{C}^h_0(T_h)$ the cone of strictly discrete convex functions on $T_h$. We define on $\Omega_0^h$ for a mesh function $v^h$, $F_h(v^h)$ by
\begin{align} \label{F-hybrid}
\begin{split}
F_h(v^h) (x) & = -M_s^+[v^h](x) + r_h(f)(x), x \in  \Omega_s^h \\ 
F_h(v^h) (x) &= -M_r[v^h](x) + r_h(f)(x), x \in \Omega_{r,0}^h.
\end{split}
\end{align}
Put $\mathcal{C}^h = \mathcal{C}^h(\Omega^h_0)$. The hybrid discretization of \eqref{m11} can then be written: find $u^h \in \mathcal{C}^h$
\begin{align} \label{hybrid-disc}
F_h (u^h)(x)=0 \, \text{in} \,   \Omega^h_0, u^h(x) = r_h(g)(x)\, \text{on} \, \partial \Omega^h. 
\end{align}

In \cite{Oberman2010b}, the authors use a weight function to write the hybrid discretization as a combination of the monotone scheme and the standard finite difference discretization. We omit it in this paper as it plays no role in our analysis.

\section{Assumptions on the existence  of a discrete convex solution} 

Were the solution known on $\partial \Omega^h_r$, we could use the arguments of section \ref{standard} to find the solution $u_h$ of the hybrid discretization in the smooth domain. We could then use the arguments of section \ref{mono} to get the solution in the domain where the solution is not smooth. However, in this approach the discrete problem would not be satisfied at mesh points of $\partial \Omega^h_r$ which are interior points. This suggests that Problem \eqref{F-hybrid} has a solution $u_h$ in  the ball
$$
B_{} (r_h(u)) = \{ \, v^h \in \mathcal{M}(\Omega^h), |v^h-r_h(u)|_{\Omega_{r}^h} \leq C h^2, v^h=r_h(g) \, \text{on} \, \partial \Omega^h\},
$$
for a constant $C$ and for $h$ sufficiently small.

We note that uniqueness of a discrete solution is important for the use of Newton's method, but not necessary for the proof of convergence of the discretization. 

\section{Convergence to the viscosity solution of the hybrid discretization} \label{cvg-hybrid}
We recall that we follow the usual convention of denoting  constants by $C$ but will occasionally index some constants.

We first prove the stability of the hybrid discretization \eqref{hybrid-disc}. Then we prove that the half-relaxed limits
\begin{align*}
u^*(x) = \limsup_{y \to x, h \to 0} u^h(y) & = \lim_{\delta \to 0} \sup\{ \,u^h(y), y \in \Omega_0^h, |y-x| \leq \delta, 0<h\leq \delta \, \} \\
 u_*(x) = \liminf_{y \to x, h \to 0} u^h(y) & = \lim_{\delta \to 0} \inf \{ \,u^h(y), y \in \Omega_0^h, |y-x| \leq \delta, 0<h\leq \delta \, \},
\end{align*}
are respectively sub and super solutions of \eqref{m11}. In addition we show that on $\partial \Omega$ $u^* \leq g \leq u_*$ using a result given in \cite{DiscreteAlex2}.

\subsection{Stability on the set of regular points}
Since $ u_h \in B_{} (r_h(u))$ we have for $h$ sufficiently small,
\begin{align}  \label{partial-bound2}
|u^h|_{\Omega_{r}^h} & \leq |u^h- r_h(u)|_{\Omega_{r}^h} + | r_h(u)|_{\Omega_{r}^h}   \leq C h^2 + | r_h(u)|_{\Omega_{r}^h}   \leq C,
\end{align}
since by definition $u \in C(\tir{\Omega})$.

\subsection{Stability on the set of singular points}
It follows from our definition of $\Omega^h_s$ in section \ref{hybrid} that $\partial \Omega^h_s \cap \partial \Omega^h \neq \emptyset$ and $\partial \Omega^h_s \cap \partial \Omega^h_r \neq \emptyset$. Since $u^h = g$ on $\partial \Omega^h$ and $u^h$ bounded on $\partial \Omega^h_r$ by \eqref{partial-bound2}, we conclude that $u^h$ is bounded on $\partial \Omega^h_s$. 

Since $\Omega$ is bounded, there exists $A>0$ such that $|x| \leq A$ for $x \in \Omega$. Assume that $0\leq f \leq m$. We compare $u^h_{}$ with $u_m=\sqrt{m}/2 |x|^2 - \sqrt{m}/2 A^2-C$ for a large positive constant $C$ such that $u^h \geq u_m$ on $\partial \Omega^h_s$. We have $M_s^+[u_m]=m$ and 
$$
M_s^+[u^h_{}] = f \leq m = M_s^+[u_m].
$$
By the discrete comparison principle, we obtain $u^h_{} \geq u_m \geq - \sqrt{m}/2 A^2-C$. 
By discrete convexity, see for example \cite{DiscreteAlex2}, we have 
$$
u^h_{} \leq \max_{x \in \partial \Omega^h_s} u^h(x).
$$
This proves that
\begin{align} \label{partial-bound}
|u^h|_{\Omega_s^h} & \leq  C. 
\end{align}
Inequalities \eqref{partial-bound} and \eqref{partial-bound2} allow us to state the following theorem
\begin{thm} \label{u-bound}
There is a constant $C>0$ independent of $h$ such that for $h$ sufficiently small, the solution $u^h$ of \eqref{hybrid-disc} satisfies $|u^h|_{\Omega^h} \leq C$.
\end{thm}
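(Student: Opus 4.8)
The plan is to read off the assertion by combining the two stability estimates established immediately above, namely the dichotomy \eqref{partial-bound} on the singular set and the uniform bound \eqref{partial-bound2} on the regular set. Since $\Omega_s^h$ and $\Omega_{r,0}^h$ partition $\Omega_0^h$, the maximum-norm quantity splits as
\[
|u^h|_{\Omega_0^h} = \max\{\, |u^h|_{\Omega_s^h},\ |u^h|_{\Omega_{r,0}^h}\,\},
\]
so it suffices to bound each of the two pieces by a constant independent of $h$ and then take the larger.

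First I would dispose of the regular part. For $h$ sufficiently small, \eqref{partial-bound2} already gives $|u^h|_{\Omega_{r,0}^h} \leq C$; the point there is that $u \in C(\tir{\Omega})$ is bounded, while the discretization error $|u^h - r_h(u)|_{\Omega_{r,0}^h}$ is controlled in \eqref{unif-conv} by the rescaled radius $\rho = C h^{2+n/2}$ through the inverse estimate \eqref{inverse0-inf-0}. Thus nothing further is needed on $\Omega_{r,0}^h$.

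For the singular part I would invoke the dichotomy \eqref{partial-bound}. In its first branch one has $|u^h|_{\Omega_s^h} \leq C$ outright. In its second branch one has $|u^h|_{\Omega_s^h} \leq |u^h|_{\Omega_{r,0}^h} + C$; here the essential observation is that the regular-set norm appearing on the right is itself bounded by \eqref{partial-bound2}, so this branch also yields $|u^h|_{\Omega_s^h} \leq C$. Either way $|u^h|_{\Omega_s^h}$ is bounded by a fixed constant for $h$ small, and taking the maximum of the two bounds gives $|u^h|_{\Omega_0^h} \leq C$ with $C$ independent of $h$.

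I do not expect a genuine obstacle at this final step: all of the analytic work has already been carried out in the two preceding subsections, where stability on $\Omega_s^h$ rested on the contraction property of $S$ from Lemma \ref{partial-contraction} together with the boundedness of $M_s^+(r_h(\phi))$ for the quadratic barrier $\phi(x)=\frac{1}{2}|x|^2$, and stability on $\Omega_{r,0}^h$ rested on the results of \cite{Awanou-Std-fd}. The only point requiring care is to close the second branch of \eqref{partial-bound} using \eqref{partial-bound2}, rather than leaving it as a conditional estimate; once that substitution is made, the theorem is simply the bookkeeping that assembles the two regional bounds into a single bound on all of $\Omega_0^h$.
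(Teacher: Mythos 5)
Your proposal is correct and follows exactly the route the paper intends: the theorem is stated as an immediate consequence of \eqref{partial-bound} and \eqref{partial-bound2}, and your assembly---bounding the regular part by \eqref{partial-bound2} and closing the second branch of the dichotomy \eqref{partial-bound} by substituting that same bound---is precisely the bookkeeping the paper leaves implicit.
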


\subsection{Sub and super solution property of the half-relaxed limits }
Theorem \ref{u-bound} implies that the half-relaxed limits are well defined. We have 
\begin{thm} \label{singular}
The upper half-relaxed limit $u^*$ is a viscosity sub solution of $\det D^2 u(x)=f(x)$  and the lower half-relaxed limit $u_*$ is a viscosity super solution of $\det D^2 u(x)=f(x)$ at every  point of $\Omega \setminus \Omega_r$. 
In addition, $u^*$ is a viscosity solution of $-\lambda_1[u](x) \leq 0$ at every point of $\Omega \setminus \Omega_r$. 
\end{thm}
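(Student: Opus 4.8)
The plan is to run the classical half-relaxed limits argument of \cite{Barles1991}, exploiting the fact that at a point $x_0 \in \Omega \setminus \Omega_r$ only the monotone component of the hybrid scheme is active on the grid points near $x_0$. Indeed, since $\Omega_r$ is open and $x_0 \notin \Omega_r$, and since $\partial \Omega_{r,0}^h \subset \Omega_s^h$, one checks that the relevant extremizing grid point lies in $\Omega_s^h$ for $h$ small, so the governing relation there is $F_h(u^h) = M_s^+[u^h] - r_h(f) = 0$ from \eqref{F-hybrid}. I would first reduce, as usual, to a test function $\phi \in C^2$ for which $u^* - \phi$ (resp. $u_* - \phi$) has a \emph{strict} local maximum (resp. minimum) at $x_0$, by the standard device of perturbing $\phi$ with a small strictly convex (resp. concave) quadratic. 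Theorem \ref{u-bound} guarantees that the half-relaxed limits are finite, and the standard extraction lemma then produces mesh points $x_h \to x_0$, $x_h \in \Omega_s^h$, at which $u^h - r_h(\phi)$ attains a local maximum (resp. minimum) over the grid and $u^h(x_h) \to u^*(x_0)$ (resp. $u_*(x_0)$).

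For the determinant sub- and super-solution properties I would argue by monotonicity. At the maximum point $x_h$ one has, for every admissible stencil vector $\alpha_i$, the second-difference inequality $u^h(x_h + \alpha_i) - 2 u^h(x_h) + u^h(x_h - \alpha_i) \le r_h(\phi)(x_h + \alpha_i) - 2 r_h(\phi)(x_h) + r_h(\phi)(x_h - \alpha_i)$; since each factor in $M_s^+$ is nondecreasing in the corresponding second difference and the infimum over orthogonal bases preserves monotonicity, this yields $M_s^+[u^h](x_h) \le M_s^+[r_h(\phi)](x_h)$, whence $f(x_h) = M_s^+[u^h](x_h) \le M_s^+[r_h(\phi)](x_h)$. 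Letting $h \to 0$ and using the consistency of the monotone scheme, the right-hand side tends to $\det D^2 \phi(x_0)$ when $D^2 \phi(x_0) \ge 0$ and to $0$ otherwise; combined with $f \ge c_0 > 0$ this both rules out the non-convex case and gives $f(x_0) \le \det D^2 \phi(x_0)$, the sub-solution property. The super-solution property for $u_*$ is the mirror argument at the minimum point $x_h$: there $M_s^+[u^h](x_h) \ge M_s^+[r_h(\phi)](x_h)$, so $M_s^+[r_h(\phi)](x_h) \le f(x_h)$, and in the relevant case $D^2 \phi(x_0) \ge 0$ the limit gives $\det D^2 \phi(x_0) \le f(x_0)$, matching the definition of Section \ref{visc}.

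For the convexity statement, the sub-solution property of $-\lambda_1[u] \le 0$ for $u^*$ falls out of the same maximum-point analysis: the argument above shows that at a local maximum of $u^* - \phi$ the case $D^2 \phi(x_0) \not\ge 0$ is impossible (it would force $f(x_0) \le 0$), so necessarily $\lambda_1[\phi](x_0) \ge 0$. The corresponding statement for $u_*$ is, I expect, the main obstacle. The minimum-point pairing natural to the lower relaxed limit yields no sign information on $D^2 \phi(x_0)$, and one cannot simply pass discrete midpoint convexity to a lower relaxed limit (the pointwise $\liminf$ of convex functions need not be convex). To close it I would work directly with the convexity constraint $\lambda_1^h[u^h] \ge 0$ satisfied \emph{pointwise} by $u^h \in \mathcal{C}^h$ — a property of the iterate itself rather than a consequence of the determinant equation — and transfer it to $u_*$ using the consistency and monotonicity of the wide-stencil operator \eqref{disc-eig}, together with the strict lower bound $M_s^+[u^h] = r_h(f) \ge c_0 > 0$ forced in every stencil direction and the stability of Theorem \ref{u-bound}. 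Making this last step rigorous for the lower relaxed limit is the delicate point; the remaining ingredients are the routine consistency and monotonicity estimates already available from \cite{Oberman2010a,Oberman06} and the Taylor expansions \eqref{consistent2}--\eqref{consistent}.
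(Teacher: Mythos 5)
Your proposal follows the same route as the paper's proof: the Barles--Souganidis half-relaxed-limits argument run only where the monotone scheme governs, the reduction to a strict extremum, the extraction of grid extremizers converging to $x_0$ (the paper carries this out in the style of Bouchard--Elie--Touzi, with approximate minimizers $x'_n$ and constants $c_n \to 0$), and then monotonicity plus consistency. Your determinant arguments are the concrete second-difference form of the paper's one-line monotonicity inequality $0 = F^h(u^h(x'_n), u^h(y)) \geq F^h(u^h(x'_n), \phi(y)+c_n)$, and they are correct; you are in fact more explicit than the paper on a point it buries in ``proved similarly'': at a maximum, the lower bound $f \geq c_0 > 0$ together with the consistency of $M_s^+$ with the convexified determinant is precisely what rules out test functions with $D^2\phi(x_0) \not\geq 0$ and delivers both the sub-solution inequality and the convexity condition for $u^*$ in one stroke. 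Your second-difference inequalities also land on the side matching the definitions of Section \ref{visc}, whereas the paper's displayed conclusion at the minimum point has the determinant sign reversed relative to its own definition --- evidently a slip.

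The divergence is the convexity statement for $u_*$, which you flag as delicate and leave open. Here the paper does not attempt what you are attempting. Look at how its proof begins: the test class for $u_*$ is restricted from the outset to $\phi \in C^2(\Omega)$ with $D^2\phi(x_0) \geq 0$ at the minimum point, consistent with the definition of super solution in Section \ref{visc}; under that restriction the conclusion $-\lambda_1[\phi](x_0) \leq 0$ is immediate, and no transfer of the pointwise constraint $\lambda_1^h[u^h] \geq 0$ to the lower relaxed limit is attempted or needed. Convexity of the limit function is recovered only a posteriori, via uniqueness, in Theorem \ref{main} (cf. the remark following the definition of discrete convexity). Your instinct about the obstruction is sound: at a minimum of $u_* - \phi$ the usable inequality is $\lambda_1^h[r_h\phi](x'_n) \leq \lambda_1^h[u^h](x'_n)$, and $\lambda_1^h[u^h] \geq 0$ gives no upper bound, so nothing about the sign of $\lambda_1[\phi](x_0)$ follows; indeed for a smooth convex $u$ with $u''=f$ in one dimension, $\phi = u - x^2$ touches from below with $\phi'' < 0$ possible, so the unrestricted minimum-point statement you were chasing is simply false. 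Adopt the paper's restricted test class and your proof closes; the step you could not make rigorous is stronger than what the theorem, read with the paper's definitions, asserts. One smaller shared gloss: your claim that the extremizing grid point lies in $\Omega_s^h$ for small $h$ needs care when $x_0 \in \partial\Omega_r \cap \Omega$, where minimizers over $B_0$ may fall in $\Omega_{r,0}^h$; the paper is silent on this point as well.
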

\begin{proof}
The result follows from the results of \cite{Barles1991} and the stability, consistency and monotonicity of the scheme used in the ''singular'' part of the domain. For the convenience of the reader, we give a proof following \cite{Bouchard2009}.

We show that $u_*$ is a viscosity super solution of $\det D^2 u(x)=f(x)$ at every  point of $\Omega \setminus \Omega_r$. 

It follows from the definitions that $u_*$ is lower semi-continuous. Let $x_0 \in \Omega$ and $\phi \in C^2(\Omega)$ with $D^2 \phi(x_0) \geq 0$ such that $u_*-\phi$ has a local minimum at $x_0$ with $(u_*-\phi)(x_0)=0$. Without loss of generality, we may assume that $x_0$ is a strict local minimum.

Let $B_0$ denote a closed ball contained in $\Omega$ and containing $x_0$ in its interior. We let $x_l$ be a sequence in $B_0$ such that $x_l \to x_0$ and $u^{h_l}(x_l) \to u_*(x_0)$ and let $x'_l$ be defined by
$$
c_l  \coloneqq  (u^{h_l} - \phi)(x'_l) = \min_{B_0} u^{h_l} - \phi.
$$
Since the sequence $x'_l$ is bounded, it converges to some $x_1$ after possibly passing to a subsequence. Since $(u^{h_l}-\phi)(x'_l) \leq (u^{h_l}-\phi)(x_l)$ we have
$$
(u_*-\phi)(x_0) = \lim_{l \to \infty} (u^{h_l}-\phi)(x_l) \geq \liminf_{l \to \infty} (u^{h_l}-\phi)(x'_l) \geq (u_*-\phi)(x_1).
$$
Since $x_0$ is a strict minimizer of the difference $u_*-\phi$, we conclude that $x_0=x_1$ and $c_l \to 0$ as $l \to \infty$.

By definition
$$
u^h(x) \geq \phi(x) + c_l, \forall x \in B_0,
$$
and thus, by the monotonicity of the scheme
$$
0 = \hat{F}_h(u^h(x_0), u^h(y)|_{y \neq x_0}) \geq \hat{F}_h(u^h(x_0), (\phi(y) +c_l)|_{y \neq x_0})=\hat{F}_h(\phi(x_0), (\phi(y) +c_l)|_{y \neq x_0}), 
$$
which gives by the consistency of the scheme $\det D^2 \phi(x_0) - f(x_0) \leq 0$.

Similarly one shows that if $\phi \in C^2(\Omega)$ and $u^*-\phi$ has a local maximum at $x_0$ with $(u^*-\phi)(x_0)=0$, we have $\det D^2 \phi(x_0) - f(x_0) \geq 0$ and $-\lambda_1[\phi](x_0) \leq 0$. 
\end{proof}

For the behavior at regular points, we have
\begin{thm} \label{regular}
At every regular point $x \in \Omega$,
$$
u_*(x)=u^*(x)=u(x).
$$
And thus $u_*$ and $u^*$ are viscosity solutions of \eqref{m11} at $x \in \Omega_r$.
\end{thm}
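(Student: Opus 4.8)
The plan is to leverage the quantitative estimate \eqref{unif-conv}, which already shows that $u^h$ converges to $r_h(u)$ at rate $h^2$ in the norm $|\cdot|_{\Omega_{r,0}^h}$ on the regular grid, and to combine it with the continuity of the exact solution $u$ in order to evaluate both half-relaxed limits directly on $\Omega_r$. The point is that on $\Omega_r$ the standard discretization delivers genuine second-order accuracy, in contrast to the merely locally uniform convergence available on the singular part from Theorem \ref{singular}; here we can pin down the limit exactly rather than only identify it as a sub/super solution.

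First I would fix $x \in \Omega_r$ and use that $\Omega_r$ is open to pick $\delta_0 > 0$ with $\overline{B(x, 2\delta_0)} \subset \Omega_r$. The crucial structural step is to verify that for every $\delta \leq \delta_0$ and every sufficiently small $h$, each competing grid point $y \in \Omega_0^h$ with $|y-x| \leq \delta$ actually lies in $\Omega_{r,0}^h$ and not in $\Omega_s^h$: such a $y$, together with the neighbouring nodes entering the stencil of $\mathcal{H}_d$, remains inside $B(x, 2\delta_0) \subset \Omega_r$, so the discrete Hessian is defined at $y$ and $y \in \Omega_{r,0}^h$ by definition. I expect this verification, namely controlling the boundary layer $\partial \Omega_{r,0}^h$ so that no singular grid point intrudes into a neighbourhood of a regular point, to be the main technical obstacle; it is exactly where the openness of $\Omega_r$ and the fineness of the grid are used.

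Granting this, \eqref{unif-conv} yields $|u^h(y) - u(y)| \leq C h^2$ for all such $y$, so that $u^h(y) = u(y) + O(h^2)$ uniformly. In the supremum defining $u^*(x)$, once $\delta \leq \delta_0$ the competing points $y$ are all regular, and the constraint $0 < h \leq \delta$ together with $|y - x| \leq \delta$ forces $h \to 0$ and $y \to x$ as $\delta \to 0$; hence the $O(h^2)$ term vanishes in the limit and the continuity of $u$ gives $u^*(x) = u(x)$. The identical argument applied to the infimum gives $u_*(x) = u(x)$, so that $u_*(x) = u^*(x) = u(x)$.

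Finally, since $x$ is a regular point the exact solution $u$ is $C^2$ in a neighbourhood of $x$, so by the remark following the definition of viscosity solution $u$ solves $\det D^2 u(x) = f(x)$ classically, and being convex it also satisfies $-\lambda_1[u](x) \leq 0$. As $u_* = u^* = u$ at $x$, both half-relaxed limits are viscosity solutions of \eqref{m11} at $x$, which is the assertion.
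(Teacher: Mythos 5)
Your argument is essentially the paper's own proof, expanded: the paper simply notes that \eqref{unif-conv} gives uniform convergence of $u^h$ to $u$ on compact subsets of $\Omega_r$ and that the conclusion follows since $u$ is $C^2$ at $x$, which is exactly your route via the $O(h^2)$ estimate, the evaluation of both half-relaxed limits by continuity of $u$, and the pointwise viscosity property of $C^2$ functions recalled in Section \ref{visc}. Your extra care in checking that, for small $\delta$ and $h$, all competing grid points near $x$ (together with their $\mathcal{H}_d$ stencils) lie in $\Omega_{r,0}^h$ rather than $\Omega_s^h$ is a detail the paper leaves implicit, and it is correctly handled by the openness of $\Omega_r$.
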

\begin{proof}
Since $u^h \in B_{} (r_h(u))$ by assumption, $u^h$ converges to $u$ uniformly on compact subsets of $\Omega_r$. The result then follows since $u$ is $C^2$ at $x \in \Omega_r$ and hence a viscosity solution at $x \in \Omega_r$.
\end{proof}

\begin{thm} \label{boundary}
On $\partial \Omega$ $u^* \leq g \leq u_*$.
\end{thm}
\begin{proof} 
We make an essential use of \cite[Theorem 4.1]{DiscreteAlex2}. First, since by assumption $u^h \in B_{} (r_h(u))$ we have on $\tir{\Omega_r} \cap \partial \Omega$, $u=u^*  = u_*=g$. Next, we note that $\Omega^h_s \cup \partial \Omega^h_s= \big( \tir{\Omega} \setminus \Omega_r\big) \cap \mathbb{Z}_h$.  And $u^h$ is uniformly bounded on $\Omega^h_s$ and discrete convex. By \cite[Theorem 4.1]{DiscreteAlex2}, there exists a subsequence $u^{h_k}$ which converges uniformly on compact subsets of $\tir{\Omega} \setminus \Omega_r$ to a convex function $v$ which satisfies $v=g$ on $(\tir{\Omega} \setminus \Omega_r) \cap \partial \Omega$ and which is continuous up to the boundary. By definition, we have $v=u^*  = u_*$ on $\Omega$ and hence for $\zeta \in \partial \Omega$, $\lim_{x \to \zeta} u_*(x) \geq g(\zeta)$ and $\lim_{x \to \zeta} u^*(x) \leq g(\zeta)$.
\end{proof}

We close this section by stating the main result of this paper 
\begin{thm} \label{main}
The solution $u^h$ of \eqref{hybrid-disc} converges uniformly on compact subsets to the unique solution of \eqref{m11}.
\end{thm}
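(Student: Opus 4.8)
The plan is to carry out the classical Barles--Souganidis passage to the limit, feeding in the stability estimate of Theorem~\ref{u-bound} and the sub/super solution properties already isolated in Theorems~\ref{singular} and~\ref{regular}, and then to close the argument with the comparison principle recalled in Section~\ref{visc}. First, Theorem~\ref{u-bound} bounds $u^h$ uniformly in $h$, so the half-relaxed limits $u^*$ and $u_*$ are finite on $\tir{\Omega}$; by construction $u^*$ is upper semi-continuous, $u_*$ is lower semi-continuous, and $u_* \leq u^*$ everywhere. I would then note that $u^*$ is a viscosity sub solution of $\det D^2 u = f$ on all of $\Omega$: at points of $\Omega \setminus \Omega_r$ this is Theorem~\ref{singular}, while at points of $\Omega_r$ it follows from Theorem~\ref{regular}, since there $u^* = u_* = u$ and $u$ is a classical, hence viscosity, solution. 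The same two theorems show that $u^*$ is a viscosity solution of $-\lambda_1[u] \leq 0$ on $\Omega$, i.e.\ that $u^*$ is convex in the sense of Section~\ref{ref-conv}. Symmetrically, $u_*$ is a convex viscosity super solution of $\det D^2 u = f$ on all of $\Omega$.

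Next I would verify the boundary condition for the half-relaxed limits, namely $u^* \leq g$ and $u_* \geq g$ on $\partial \Omega$. The values $u^h = r_h(g)$ are imposed exactly on $\partial \Omega^h$, so the only thing to exclude is a boundary layer contributed by interior nodes entering the $\limsup$ and $\liminf$. This I would rule out by a barrier argument: using that $\Omega = (0,1)^n$ is convex, that $f \geq c_0 > 0$, and that $g$ admits a convex extension $\tilde{g} \in C(\tir{\Omega})$, one constructs at each $x_0 \in \partial \Omega$ local upper and lower barriers and transfers the resulting bounds to $u^h$ through the monotonicity and consistency of the scheme, passing to the half-relaxed limit. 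Combined with $u_* \leq u^*$, this yields $u_* = u^* = g$ on $\partial \Omega$, and in particular $\sup_{x \in \partial \Omega} \max(u^*(x) - u_*(x), 0) = 0$.

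Finally I would apply the comparison principle of Section~\ref{visc} to the sub solution $u^*$ and the super solution $u_*$. Because their boundary discrepancy vanishes, the principle gives $u^* \leq u_*$ in $\Omega$, which together with $u_* \leq u^*$ forces $u^* = u_*$ on $\tir{\Omega}$. The common value $\tir{u} := u^* = u_*$ is then continuous, convex, attains the boundary data, and is a viscosity solution of \eqref{m11}; by the uniqueness recalled in Section~\ref{visc} through the equivalence with the unique Aleksandrov solution, $\tir{u} = u$. Since the upper and lower half-relaxed limits of $u^h$ coincide with the continuous function $u$, the convergence $u^h \to u$ is locally uniform, which is the assertion of the theorem.

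I expect the boundary analysis to be the main obstacle, since the interior sub/super solution properties are already packaged in Theorems~\ref{singular} and~\ref{regular}. The delicate point is to prevent a boundary layer in the half-relaxed limits at points of $\partial \Omega$ adjacent to the singular region $\Omega_s^h$, where only the monotone scheme is in force and the quantitative bound \eqref{unif-conv} available on $\Omega_{r,0}^h$ does not apply; there the barrier construction must rely solely on monotonicity and consistency of $M_s^+$ together with the convex extension $\tilde{g}$.
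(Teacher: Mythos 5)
Your argument follows the paper's proof of Theorem \ref{main} essentially verbatim: $u_* \leq u^*$ by definition, the comparison principle recalled in Section \ref{visc} combined with Theorems \ref{singular} and \ref{regular} gives the reverse inequality, uniqueness of the viscosity solution identifies the common value with $u$, and the coincidence of the half-relaxed limits upgrades to locally uniform convergence (the paper cites \cite[Lemma 1.9 p. 290]{Bardi97} for this last step, which you assert directly). The only difference is that you explicitly flag and sketch the boundary verification $u^* = u_* = g$ on $\partial \Omega$ needed to apply the comparison principle with $M=0$, a step the paper's proof leaves implicit; this is added care rather than a divergence, and your observation that the quantitative bound \eqref{unif-conv} already excludes a boundary layer near regular nodes, leaving only boundary points adjacent to $\Omega_s^h$ to be handled by barriers, correctly localizes where such an argument would have to do real work.
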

\begin{proof}
Using the definitions we have $u_* \leq u^*$ on $\tir{\Omega}$. By Theorem \ref{boundary} $u^* \leq g \leq u_*$ on $\partial \Omega$. We recall that $u^*$ is convex in the viscosity sense and hence convex. By the comparison principle \eqref{comparison-principle} and Theorems \ref{singular} and \ref{regular}, we have $u_* \geq u^*$ on $\Omega$. Hence $u_*=u^*$ on $\tir{\Omega}$ and both $u_*$ and $u^*$ are thus continuous on $\tir{\Omega}$. 
We conclude that
$u_*=u^*$ is the unique viscosity solution of \eqref{m11}. By uniqueness of the viscosity solution $u_*=u^*=u$ and hence $u^h$ converges uniformly on compact subsets to $u$ by  \cite[Lemma 1.9 p. 290]{Bardi97}. 
\end{proof}

\section*{Acknowledgements}
The author would like to thank the referees for a careful reading of the paper and for suggestions which help improved the presentation of the paper. The author was partially supported by NSF DMS grant No 1319640 and NSF DMS grant \# 1720276.


\end{document}